\newtheorem{theorem}{Theorem}[section]
\newtheorem{definition}[theorem]{Definition}
\newtheorem{lemma}[theorem]{Lemma}
\newtheorem{remark}[theorem]{Remark}
\def\eps{\varepsilon}
\def\N{\mathbb{N}}
\def\P{\mathbb{P}}
\def\E{\mathbb{E}}
\DeclareMathOperator{\card}{card}
\DeclareMathOperator{\var}{var}
\newcommand{\x}{\bold{x}}
\newcommand{\y}{\bold{y}}
\newcommand{\M}{M^{RLE}}
\def\lt{\left}
\def\rt{\right}
\begin{document}

\title{R\'enyi entropy and pattern matching for run-length encoded sequences}
\author{J\'er\^ome Rousseau}
\address{J\'er\^ome Rousseau, Departamento de Matem\'atica, Universidade Federal da Bahia\\
Av. Ademar de Barros s/n, 40170-110 Salvador, Brazil}
\address{Departamento de Matem\'atica, Faculdade de Ci\^encias da Universidade do Porto,\\Rua do Campo Alegre, 687, 4169-007 Porto, Portugal}
\email{jerome.rousseau@ufba.br}
\urladdr{http://www.sd.mat.ufba.br/~jerome.rousseau}

\keywords{R\'enyi entropy, longest common substring, string matching, data compression, mixing process}
\subjclass[2010]{60F15, 60G10, 94A17, 68P30, 37A25, 60J10}
\thanks{This work was partially supported by CNPq, by FCT project PTDC/MAT-PUR/28177/2017, with national funds, and by CMUP (UIDB/00144/2020), which is funded by FCT with national (MCTES) and European structural funds through the programs FEDER, under the partnership agreement PT2020.}

\maketitle
 
\begin{abstract}
In this note, we studied the asymptotic behaviour of the length of the longest common substring for run-length encoded sequences. When the original sequences are generated by an $\alpha$-mixing process with exponential decay (or $\psi$-mixing with polynomial decay), we proved that this length grows logarithmically with a coefficient depending on the R\'enyi entropy of the pushforward measure. For Bernoulli processes and Markov chains, this coefficient is computed explicitly.
\end{abstract}

\section{Introduction}
Since Big Data seems to be the trending field of (at least) this decade, data compression algorithms have become a fundamental tool for data storage and are in the first lines of the battle between storage costs, computations costs and delays in data availability. For an introduction to data compression we refer the reader to \cite{sayood} and to the unavoidable Lempel-Ziv algorithms \cite{LZ77,LZ78}.

For sequences with long runs of the same value, Run-Length Encoding (RLE) is a simple and efficient lossless data compression method. More precisely, for a run of the same value, the algorithm stored the value and the length of the run. For example, the following binary sequence
\[00001110000000011001111111111100000000\]
will be compressed as
\[(0,4)(1,3)(0,8)(1,2)(0,2)(1,9)(0,8).\]
Thus, this sequence of 37 characters will be represented after compression by a sequence of 14 characters.

RLE is typically used for image compression but has also application in image analysis \cite{hinds}, texture analysis of volumetric data \cite{texture} and has also been used for data compression of television signals \cite{tele} and fax transmission \cite{fax}.

Since pattern (or string) matching problems are not only highly significant in computer sciences, information theory and probability (see e.g. \cite{KS, KASW,deb,abgara, Neu, ColletRedig}) but also in biology \cite{W-book}, geology \cite{geo} and linguistics (e.g. \cite{deb2} and references therein) among others, algorithms to solve string matching problems for RLE strings have been developed (see e.g. \cite{apostolico,freschi,hoosh,ahsan,chen} and references within).
  
In this note, we will focus on a particular string matching problem: the longest common substring problem (or longest consecutive common subsequence problem). More precisely, we will concentrate on the asymptotics of the length of the longest common substring, i.e. for two sequences $\x$ and $\y$ drawn randomly from the same alphabet, the behaviour of
$$
M_n(\x,\y)= \displaystyle \max \left\{k \ : \x_i^{i+k-1} = \y_j^{j+k-1} \ \mbox{for some} \ 0 \leq i,j \leq n-k\right\}
$$
when $n\rightarrow\infty$.

In \cite{BaLiRo}, it was proved that for $\alpha$-mixing process with exponential decay $M_n \sim \frac{2}{H_2(\mu)}\log n$ almost surely, where $H_2(\mu)$ is the R\'enyi entropy (see Definition~\ref{defren}) of the stationary measure~$\mu$. Similar results have been proved for more than two sequences \cite{barrosrousseau} and for random sequences in random environment \cite{LCS-random}.

In \cite{CoLaRo}, the authors wondered if the above mentioned result holds if the sequences are transformed following certain rules of modification. Thus, if $f$ is a measurable function (called an encoder) transforming a sequence $\x$ into another sequence $f(\x)$, they studied the behaviour of $M_n(f(\x),f(\y))$ and obtain a relation with the R\'enyi entropy of the pushforward measure $f_*\mu$.

A natural question would be to ask if we could apply the results presented in \cite{CoLaRo} when the encoder is a compression algorithm and in particular the run-length encoder. Unfortunately, to obtain their main result, the authors needed that the encoder does not compress too much the sequences, an hypothesis which is not satisfied by the run-length encoder. Thus, we present here a different proof which allows us to prove, in Theorem~\ref{thprinc}, that, when $f$ is the run-length encoder and the original sequences are generated by an $\alpha$-mixing process with exponential decay (or $\psi$-mixing with polynomial decay), almost surely 
\[M_n(f(\x),f(\y)) \underset{n\rightarrow\infty}\sim \frac{2}{H_2(f_*\mu)}\log n.\]
We apply this result to Bernoulli processes (Example~\ref{exbern}) and Markov chains (Examples~\ref{exmark1} and~\ref{exmark2}), and, in these cases, compute explicitly $H_2(f_*\mu)$. We emphasize that for Markov chains the computation are different whether there are two or more than two states.

Other examples of processes satisfying our mixing assumptions are Gibbs states of a H\"older-continuous potential \cite{Bowen,Ruelle}, ARMA processes \cite{mokkadem}, some renewal processes \cite{abadi-car-gallo} and stationary determinantal process on the integer lattice \cite{FLQ}. We refer the reader to \cite{dou,brad1} for more examples and deep surveys on strong mixing conditions.

\section{Longest common substring for RLE sequences}
 We consider a stationary stochastic process $X=(X_n)_{n\in \N}$ over a finite alphabet $\mathcal{A}$, with stationary measure $\mu$. We will denote $\sigma$ the left shift and, for $i\in \N$,  $\sigma^i X=(X_{i+n})_{n\in \N}$. For $k\in\N$, we denote by $\mathcal{A}^k$ the set of cylinders or strings of length $k$ and the length of a cylinder $\omega$ will be denoted $|\omega|$. When there is no ambiguity, cylinders of $\mathcal{A}^k$ will be denoted $\omega$. We will use the notation $x_i^{i+k-1}$ if we need to indicate its time of occurrence $i$. Moreover, $\mu(\omega)$ will denote the probability $\mu(X_i^{i+k-1}=\omega)$ (which is independent of $i$ by stationarity). 
 
 We will be interested in some statistical properties of run-length encoded (RLE) sequences where the original sequences are generated by the stochastic process $X$.
  
 \begin{definition}
 Let $\mathcal{B}=\{(\alpha,k)\}_{\alpha\in\mathcal{A},k\in\N}$. We define the run-length encoder $f:\mathcal{A}^\N\rightarrow\mathcal{B}^\N$ by
 \[f(\underset{k_1}{\underbrace{\alpha_1\dots \alpha_1}}\underset{k_2}{\underbrace{\alpha_2\dots \alpha_2}}\dots\underset{k_n}{\underbrace{\alpha_n\dots \alpha_n}}\dots)=(\alpha_1,k_1)(\alpha_2,k_2)\dots(\alpha_n,k_n)\dots\]
 We observe that for all $i\in\N$, we consider that $\alpha_{i+1}\neq\alpha_i$.
  \end{definition}
 
 We will focus our analysis on the length of the longest common substring of RLE sequences:
 
 \begin{definition}
Given two sequences $\x,\y,$ we define the $n$-length of the longest common substring by
$$
M_n(\x,\y)= \displaystyle \max \left\{k \ : \x_i^{i+k-1} = \y_j^{j+k-1} \ \mbox{for some} \ 0 \leq i,j \leq n-k\right\}.
$$
and we will study the behaviour of the $n$-length of the longest common substring of the RLE sequences $f(\x),f(\y)$
$$
\M_n(\x,\y):= M_n(f(\x),f(\y)).
$$
\end{definition}
 We will prove that $\M_n$ is linked with the R\'enyi entropy of the pushforward measure $f_*\mu$. We recall that $f_*\mu(.)=\mu(f^{-1}(.))$ and we observe that $f_*\mu$ is the law of the stochastic process $f(X)$ but is in general not stationary. We give now the definition of R\'enyi entropy:
 \begin{definition}\label{defren}
For $k>1$, the lower and upper R\'enyi entropies of order $k$ of a measure $P$ are defined as
$$\underline{H}_k(P) = -\displaystyle\underset{n \to \infty}{\underline{\lim}}\frac{1}{(k-1)n}\log\sum\limits_{\omega} {P}(\omega)^k \ \  \mbox{and} \ \ \overline{H}_k({P}) = -\displaystyle\underset{n \to \infty}{\overline{\lim}}\frac{1}{(k-1)n}\log\sum\limits_{\omega} {P}(\omega)^k \ ,$$
where the sums are taken over all cylinders $\omega$ of length $n$. When the limit exists we denote by ${H}_k({P})$ the common value. We note that the R\'enyi entropy of order $k$ is also called generalized R\'enyi entropy and that the R\'enyi entropy of order $2$ is often only called R\'enyi entropy.
\end{definition}
The existence of the R\'enyi entropy of order $k$ has not been proved for general stochastic processes. However, it was proved for Bernoulli processes, finite Markov chains (e.g. \cite{rached}), infinite Markov chains \cite{ciuperca}, Gibbs measures of a H\"older-continuous potential \cite{haydn-vaienti}, for $\phi$-mixing measures \cite{luc}, for weakly $\psi$-mixing processes \cite{haydn-vaienti} and for $\psi_g$-regular processes \cite{AbCa}. 

\begin{definition}
The process $X$ with stationary measure $\mu$ is $\alpha$-mixing if there exists a function $\alpha: \mathbb{N} \to \mathbb{R}$ where $\alpha(g)$ converges to zero when $g$ goes to infinity and such that
\begin{equation*}
\sup_{A \in \mathcal{F}_0^n \ ; \  B \in \mathcal{F}_{n+g}^\infty}\lt|\mu\lt( A \cap B\rt) -\mu(A) \mu(B)\rt| \leq \alpha(g) \ ,
\end{equation*}
for all $n \in \mathbb{N}$, where  for $0\leq J\leq L\leq\infty$, $\mathcal{F}_J^L$ denotes the $\sigma$-algebra $\sigma (X_k, J\leq k\leq L)$.

 When $\alpha(g)$ decreases exponentially fast to zero, we say that the process is $\alpha$-mixing with exponential decay.

The process is $\psi$-mixing if there exists a function $\psi: \mathbb{N} \to \mathbb{R}$ where $\psi(g)$ converges to zero when $g$ goes to infinity and such that
\begin{equation*}
\sup_{A \in \mathcal{F}_0^n \ ; \  B \in \mathcal{F}_{n+g}^\infty}\lt|\frac{\mu\lt( A \cap B\rt) -\mu(A) \mu(B)}{\mu(A) \mu(B)}\rt| \leq \psi(g),
\end{equation*}
for all $n \in \mathbb{N}$. 
\end{definition}

To obtain information on the growth length of the longest common substring for RLE sequences, we will need an assumption on the decay of the measure of cylinders:

(A) There exist $c>0$ and $h>0$, such that for any $n\in\N$ and any $a\in\mathcal{A}$
\[\mu(\underset{n}{\underbrace{a\dots a}})\leq c e^{-hn}.\]

We observe that in particular this assumption is always satisfied if the process is $\psi$-mixing with summable decay \cite[Lemma 1]{GS}.

First of all, without mixing assumption, we will prove an upper bound for the growth rate of the length of the longest common substring for RLE sequences:

\begin{theorem}\label{thinf} If $\underline{H}_2(f_*\mu)>0$ and if hypothesis (A) is satisfied, then for almost every $\x,\y$,
\begin{eqnarray*}\label{ineq1discretecase}
\displaystyle\underset{n \to \infty}{\overline{\lim}}\frac{\M_n(\x,\y)}{\log n} \leq \frac{2}{\underline{H}_2(f_*\mu)} \cdot
\end{eqnarray*}
\end{theorem}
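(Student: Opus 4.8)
The plan is to show that the longest common substring of the RLE sequences cannot be too long by a first-moment (Borel--Cantelli) argument, controlled by the R\'enyi entropy of order $2$ of the pushforward measure $f_*\mu$. The key point is that a common substring in $f(\x)$ and $f(\y)$ of length $k$ corresponds, back in the original sequences, to a matching block whose total length (in the original alphabet) can be as small as $k$; hypothesis (A) is precisely what prevents this block from being \emph{too} short with high probability, so that a genuine $k$-string match in the RLE world forces a match of roughly $k$ original symbols (up to a logarithmically negligible correction).

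The steps, in order, would be as follows. First I would fix a small $\delta>0$ and set $k = k_n := \lceil \frac{2}{\underline H_2(f_*\mu)}(1+\delta)\log n\rceil$, the candidate threshold. Next, for a fixed pair of starting positions $i,j$ in the RLE sequences $f(\x), f(\y)$, I would bound the probability that $f(\x)_i^{i+k-1} = f(\x)_j^{j+k-1}$ by summing over RLE-cylinders $\omega$ of length $k$: by independence-after-decoupling this is essentially $\sum_\omega f_*\mu(\omega)^2$ up to lower-order mixing errors (here there is no mixing hypothesis, but the two sequences $\x$ and $\y$ are independent, so the match probability for a fixed $\omega$ is exactly $f_*\mu(\omega)^2$ with no mixing needed — the only subtlety is that an RLE-string of length $k$ at position $i$ is not literally an $f_*\mu$-cylinder because $f_*\mu$ is not stationary, so one must write this as a sum of genuine cylinder probabilities or bound it via the non-stationary analogue). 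Then I would use hypothesis (A) to control the number of pairs $(i,j)$ that are \emph{relevant}: the positions $i \le n$ in $f(\x)$ that index within the first $n$ \emph{original} symbols correspond to the first $O(n)$ runs, but conversely a single RLE-symbol can absorb many original symbols; (A) guarantees that with overwhelming probability no run among the first $n$ original symbols is longer than $C\log n$, so the relevant RLE positions number at most $n$ and each RLE-string of length $k$ decodes to at most $Ck\log n$ original symbols. Summing, the expected number of $(i,j)$ with an RLE-match of length $k_n$ is at most $n^2 \sum_{|\omega|=k_n} f_*\mu(\omega)^2 \le n^2 e^{-(k_n-1)\underline H_2(f_*\mu)(1-\delta')} \to 0$ by the definition of $\underline H_2$ and the choice of $k_n$. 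A Borel--Cantelli argument along a suitable subsequence $n_m$ (e.g. $n_m = 2^m$), together with the monotonicity of $\M_n$ in $n$, then upgrades this to the almost-sure statement $\limsup_n \M_n/\log n \le \frac{2}{\underline H_2(f_*\mu)}(1+\delta)$, and letting $\delta \to 0$ finishes the proof.

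The main obstacle I anticipate is the bookkeeping around the non-stationarity of $f_*\mu$ and the mismatch between ``length in RLE symbols'' and ``length in original symbols.'' Concretely: the quantity $\sum_{|\omega|=k} f_*\mu(\omega)^2$ in Definition~\ref{defren} is defined via cylinders of the pushforward process \emph{starting at time $0$}, whereas a match in $\M_n$ occurs at arbitrary RLE-positions $i,j$; because $f(X)$ is not stationary one cannot simply translate, and one has to either (i) observe that a length-$k$ RLE-string occurring at position $i$ has probability bounded by a sum over the finitely many original-symbol contexts that could precede it, each contributing a genuine $f_*\mu$-cylinder probability, or (ii) absorb this into the exponential rate using that there are only $|\mathcal A|$ possible first symbols and the entropy rate is insensitive to a bounded multiplicative factor. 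Hypothesis (A) is the tool that makes the position-counting honest, ensuring the event ``some original run is anomalously long'' has summable probability and hence can be discarded; keeping track of the resulting $\log n$ factors and checking they are swallowed by the $(1+\delta)$ slack is the only genuinely delicate calculation.
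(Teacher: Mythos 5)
Your plan follows essentially the same route as the paper's proof: a first-moment bound on the number of matching pairs at the threshold $k_n\approx \frac{2}{\underline{H}_2(f_*\mu)}\log n$, the definition of $\underline{H}_2(f_*\mu)$ to make that expectation $o(1)$, hypothesis (A) to police the correspondence between RLE positions and original positions, and Borel--Cantelli along a geometric subsequence plus monotonicity of $\M_n$. The one place where your sketch is not yet a proof is precisely the obstacle you flag, and of your two proposed fixes only a strengthened form of (i) works. Option (ii) fails: the reason $\P\bigl(f(X)_i^{i+k-1}=\omega\bigr)$ is not comparable to $f_*\mu(\omega)$ up to a bounded factor is not the choice of the first symbol, but the fact that the $i$-th run starts at a random original position determined by \emph{all} preceding run lengths, so the family of ``contexts that could precede it'' is infinite, not finite. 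The honest version of (i) --- and what the paper actually does --- is to re-index matches by positions in the \emph{original} sequence: introduce $\tilde{M}_n(\x,\y)=\max\{k:\ f(\sigma^i\x)_1^{k}=f(\sigma^j\y)_1^{k}\ \text{for some}\ 0\le i,j\le n-k\}$, so that each individual event has probability exactly $\mu(f^{-1}\omega)=f_*\mu(\omega)$ by stationarity of $\mu$, and prove the deterministic comparison $\M_{u(n)}\le \tilde{M}_n$ on the event $\{|f(\x_1^n)|\ge u(n)\}\cap\{|f(\y_1^n)|\ge u(n)\}$ with $u(n)=n/(\log n)^{\delta}$, $\delta>1/h$; hypothesis (A) then bounds the complement of this event by $O(1/n)$, since its failure forces a monochromatic run of length at least $n/u(n)$ somewhere in the first $n$ symbols. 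Your variant (keeping $\M_n$ and letting the original positions range up to $Cn\log n$ on a good event) would work just as well, since the extra $(\log n)^2$ is swallowed by the $(1+\delta)$ slack, but in either form the explicit re-indexing by original positions is the step that must be written down; once it is, the rest of your outline (the $n^2\sum_{\omega}f_*\mu(\omega)^2$ bound, the subsequence, and $\log u(n)/\log n\to 1$) goes through exactly as in the paper.
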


If the process is $\alpha$-mixing with an exponential decay (or $\psi$-mixing with polynomial decay) and if for cylinders $C_n$ of length $n$ in $\mathcal{B}^n$, their preimage $f^{-1}C_n$ is of length at most $h(n)$ with $h(n)={o}(n^\gamma)$ for some $\gamma>0$, one could use  the ideas of \cite{CoLaRo} to get a lower bound. Nevertheless, the run-length encoder does not satisfy this last necessary assumption since preimage of cylinders under $f$ can have arbitrary length. Thus we present a different proof here to obtain the lower bound.

\begin{theorem}\label{thprinc}
If $\underline{H}_2(f_*\mu)>0$, hypothesis (A) is satisfied and the process is $\alpha$-mixing with an exponential decay (or $\psi$-mixing with $\psi(g)=g^{-a}$ for some $a>0$),
then, for almost every realizations $\x,\y$,
\begin{eqnarray*}\label{ineq2discretecase}
\displaystyle\underset{n \to \infty}{\underline{\lim}}\frac{\M_n(\x,\y)}{\log n} \geq \frac{2}{\overline{H}_2(f_*\mu)} \cdot
\end{eqnarray*}
Thus, if the R\'enyi entropy exists, we get for almost every $\x,\y$,
\begin{equation*}
\lim_{n \to \infty}\frac{\M_n(\x,\y)}{\log n} = \frac{2}{{H}_2(f_*\mu)}.
\end{equation*}
\end{theorem}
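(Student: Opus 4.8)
The plan is to establish the lower bound
\[
\liminf_{n\to\infty}\frac{\M_n(\x,\y)}{\log n}\ge\frac{2}{\overline H_2(f_*\mu)}
\]
by a second-moment (Chebyshev) argument applied to the number of matching pairs of RLE-blocks of a suitable length $k=k(n)$. Fix $\eps>0$ and set $k\sim\frac{2-\eps}{\overline H_2(f_*\mu)}\log n$. The point is that a common substring of length $k$ in $f(\x)$ and $f(\y)$ corresponds, in the original sequences, to a matching pair of patterns whose length is a \emph{random} quantity: the total run-length $L_k$ obtained by summing up $k$ consecutive runs. Hypothesis (A) forces the runs to have exponentially decaying tails, so $L_k$ is of order $k$ with overwhelming probability, say $L_k\le C k$ outside an event of probability $e^{-ck}$. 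This is exactly the step that replaces the ``bounded preimage'' hypothesis $h(n)=o(n^\gamma)$ of \cite{CoLaRo}: we do not have a deterministic bound on the preimage length, but we have a good probabilistic one, which is enough to run the mixing estimates at scale $O(k)=O(\log n)$.

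The concrete steps I would carry out, in order, are the following. First, I would count: let $W$ be the number of pairs $(i,j)$ with $0\le i,j\le n-k$ such that the $i$-th through $(i+k-1)$-th RLE-blocks of $f(\x)$ coincide with the $j$-th through $(j+k-1)$-th RLE-blocks of $f(\y)$; then $\{W>0\}\subseteq\{\M_n\ge k\}$. Second, I would compute $\E[W]$: by the definition of $\overline H_2(f_*\mu)$ there are infinitely many $k$ (in fact, along the subsequence realizing the $\limsup$ we get all large $k$ up to an $\eps$-loss) for which $\sum_{C}f_*\mu(C)^2\ge e^{-(1+\eps)\overline H_2 k}$ where $C$ ranges over length-$k$ cylinders in $\mathcal B^k$; since $f(\x),f(\y)$ are independent with common law (the marginal over $k$ consecutive blocks, which is \emph{not} stationary, so one must be a little careful and take the worst starting block, or average), this yields $\E[W]\gtrsim n^2 e^{-(1+\eps)\overline H_2 k}\to\infty$ for the chosen $k$. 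Third — the main work — I would bound $\mathrm{Var}(W)\le \E[W]+(\text{correlation terms})$ and show the correlation terms are $o(\E[W]^2)$. Fourth, Chebyshev gives $\P(W=0)\le \mathrm{Var}(W)/\E[W]^2\to0$ along the subsequence, and a Borel--Cantelli argument along a sparse deterministic subsequence $n_\ell$ (e.g. $n_\ell=\rho^\ell$), combined with monotonicity of $\M_n$ in $n$, upgrades convergence in probability to almost-sure $\liminf$. Finally, combining with Theorem~\ref{thinf} gives the exact limit when $H_2(f_*\mu)$ exists.

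The heart of the matter, and the main obstacle, is the variance estimate. The pairs $(i,j)$ and $(i',j')$ contributing to $W$ are \emph{not} independent for two reasons: the index sets overlap (when $|i-i'|<k$ or $|j-j'|<k$, the blocks share positions), and even disjoint blocks are only mixing, not independent. The overlapping contribution is the delicate one: two overlapping common substrings of RLE-blocks force a long common substring with a periodic structure, and one must show this ``diagonal'' contribution is $O(\E[W])$ or at worst $o(\E[W]^2)$. Here I would translate back to the original alphabet: overlapping RLE-matches of length $k$ correspond to self-repetitions in $\x$ (and in $\y$) at scale $O(k)=O(\log n)$, whose probability is controlled by hypothesis (A) together with the $\alpha$-mixing (or $\psi$-mixing) decay — this is analogous to the self-matching bounds in \cite{BaLiRo}, but carried out block-wise after the RLE transformation, and the random-length phenomenon means one estimates, for each fixed value of the run-length $\ell\le Ck$, the chance of a self-repetition of that length and then sums over $\ell$. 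For the non-overlapping but mixing pairs, the $\alpha$-mixing with exponential decay (resp.\ $\psi$-mixing with $\psi(g)=g^{-a}$) introduces an error of the form $\alpha(g)$ (resp.\ a multiplicative $1+\psi(g)$) per gap $g$; choosing a gap $g$ of size a large constant times $\log n$ (still $o(k^2)$, hence harmless for the count) makes $n^4\alpha(g)\to 0$ in the exponential case, and in the polynomial $\psi$-mixing case the multiplicative errors telescope to a factor $\prod(1+g^{-a})$ which is bounded, so the product structure of $f_*\mu$ over well-separated blocks is recovered up to constants. Putting these two bounds together yields $\mathrm{Var}(W)=o(\E[W]^2)$ and the proof goes through.
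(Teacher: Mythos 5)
Your overall strategy (a second-moment bound on a pair count, truncating to cylinders whose $f$-preimage has controlled length via hypothesis (A), a polylogarithmic gap $g$ so that $n^4\alpha(g)\to 0$, and Borel--Cantelli along a sparse subsequence) is the strategy of the paper, and your observation that the unbounded preimage length is handled probabilistically rather than deterministically is exactly the right replacement for the hypothesis of \cite{CoLaRo}; the paper implements it with the set $\mathcal Z_n=\{\omega\in\mathcal B^{k_n}:|f^{-1}\omega|\le k_n^3\}$ and bounds the complement by $ce^{-hk_n^2}$. But there is a genuine gap, and it sits at the point you flag and then wave away: the non-stationarity of $f_*\mu$. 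You index your count $W$ by block positions $i,j$ inside the encoded sequences. Then (i) the summands $\P\bigl(f(X)_{i+1}^{i+k}=f(Y)_{j+1}^{j+k}\bigr)$ genuinely depend on $i$ and $j$, and neither ``worst starting block'' nor ``averaging'' yields the needed lower bound $\E[W]\gtrsim n^2\sum_\omega f_*\mu(\omega)^2$: there is no evident way to lower-bound $\sum_\omega p_i(\omega)q_j(\omega)$ for distinct marginals $p_i,q_j$ by the single quantity $\sum_\omega f_*\mu(\omega)^2$. And (ii), more seriously, the event ``the $i$-th through $(i+k-1)$-th RLE blocks of $f(X)$ equal $\omega$'' is not measurable with respect to any fixed window $\F_a^b$ of the original process, because the location in $X$ of the $i$-th run is itself random (it can lie anywhere in $[i,\infty)$). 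Since the $\alpha$-mixing hypothesis is stated for the $\sigma$-algebras $\F_0^n$ and $\F_{n+g}^\infty$ of the original process, your mixing step cannot be applied to these events as you have defined them.

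The paper resolves both problems with one device absent from your proposal: it introduces $\tilde M_n(\x,\y)=\max\{k: f(\sigma^i\x)_1^k=f(\sigma^j\y)_1^k \ \mbox{for some}\ 0\le i,j\le n-k\}$, i.e., it anchors matches to shifts of the \emph{original} sequences, and proves the comparison $\tilde M_n-1\le \M_n$ (together with $\M_{u(n)}\le\tilde M_n$ for the upper bound). With $A_{i,j}=\{f(\sigma^iX)_1^{k_n}=f(\sigma^jY)_1^{k_n}\}$, every term has probability exactly $\sum_\omega f_*\mu(\omega)^2$ by stationarity of $\mu$, so $\E(S_n)=n^2\sum_\omega f_*\mu(\omega)^2$; and for $\omega\in\mathcal Z_n$ the event $\{f(\sigma^iX)_1^{k_n}=\omega\}$ is $\F_i^{i+k_n^3}$-measurable, so mixing applies with gap $g+k_n-k_n^3$. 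You would need this re-anchoring (at the cost of the harmless ``$-1$'' in the comparison lemma) to make your second and third steps rigorous. A smaller remark: your worry about overlapping, periodically structured ``diagonal'' pairs is unnecessary. The paper disposes of all pairs with $|i-i'|\le g+k_n$ and $|j-j'|\le g+k_n$ by the trivial bound $\E(\mathbbm 1_{A_{i,j}}\mathbbm 1_{A_{i',j'}})\le\E(\mathbbm 1_{A_{i,j}})$: there are only $O\bigl(n^2(g+k_n)^2\bigr)$ such quadruples, and $k_n$ is tuned (via the $b\log\log n$ term, $b\ll-1$) so that $\E(S_n)\gg(\log n)^{2\beta}$, whence this contribution is $o\bigl(\E(S_n)^2\bigr)$ with no self-repetition analysis. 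The mixed-range pairs are handled by H\"older, contributing $4(g+k_n)\E(S_n)^{3/2}$.
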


 \begin{remark}[More than 2 sequences]
One could wonder what will happen if we want to study the growth rate of the length of the longest common substring for $k$ RLE sequences. Using the ideas presented in our proofs and in \cite[Section 4]{barrosrousseau}, one could prove that, under the same assumptions of Theorem~\ref{thprinc} and if the R\'enyi entropy of order $k$ exists and is strictly positive, for almost every realizations $\x_1,\x_2,\dots,\x_k$,
\begin{equation*}
\lim_{n \to \infty}\frac{\M_n(\x_1,\x_2,\dots,\x_k)}{\log n} = \frac{k}{(k-1){H}_k(f_*\mu)}.
\end{equation*}
 \end{remark}
 
Theorem~\ref{thinf} and Theorem~\ref{thprinc} will be proved in Section~\ref{secproof}. We will now give examples satisfying our assumptions and where the R\'enyi entropy of the pushforward measure can be explicitly computed. 
\section{Examples}
First of all, we will treat the case of Bernoulli processes and then of Markov chains. We emphasize that for Markov chains the situation and the computation are different when working with an alphabet of two symbols or an alphabet of more than two symbols.
 \subsection{Bernoulli process}\label{exbern}
 Let us consider the alphabet $\mathcal{A}=\{a,b\}$ and the Bernoulli measure $\mu$ such that $\mu(a)=p$ and $\mu(b)=1-p$ with $0<p<1$. Hypothesis (A) can be easily checked and since this process is $\alpha$-mixing with exponential decay, to apply our main theorem, we need to compute the R\'enyi entropy of the pushforward measure.
 
 Let $n\in\N$. We assume that $n$ is even (the odd case can be treated similarly). We observe that by definition of the run-length encoder, cylinders of length $n$ can only have two types, i.e. the cylinder is of type 1 and $C_n=(a,k_1)(b,k_2)(a,k_3)\dots(a,k_{n-1})(b,k_n)$ with $k_1,\dots, k_n\in\N$ or the cylinder is of type 2 and $C_n=(b,k_1)(a,k_2)(b,k_3)\dots(b,k_{n-1})(a,k_n)$ with $k_1,\dots, k_n\in\N$. 
 
It is important to notice that  $f^{-1}\left((a,k_1)(b,k_2)\dots(b,k_n)\right)=\underset{k_1}{\underbrace{a\dots a}}\underset{k_2}{\underbrace{b\dots b}}\dots\underset{k_n}{\underbrace{b\dots b}}a$. Indeed if the last symbol of $C_n$ is $(b,k_n)$, it does not only inform us that in the preimage we have a concatenation of $k_n$ symbols $b$ but also it imposes that this concatenation must be followed by a symbol $a$, otherwise, if it was a symbol b, the last symbol of $C_n$ would not be $(b,k_n)$.
 
 Thus, we have
 \begin{eqnarray*}
 \sum_{C_n} f_*\mu(C_n)^2&=& \sum_{C_n \textrm{ of type 1}} f_*\mu(C_n)^2+\sum_{C_n \textrm{ of type 2}} f_*\mu(C_n)^2\\
 &=&\sum_{k_1,\dots, k_n} \mu(f^{-1}(a,k_1)(b,k_2)\dots(b,k_n))^2+ \sum_{k_1,\dots, k_n\in\N}\mu(f^{-1}(b,k_1)(a,k_2)\dots(a,k_n))^2\\
 &=&\sum_{k_1,\dots, k_n} \mu(\underset{k_1}{\underbrace{a\dots a}}\underset{k_2}{\underbrace{b\dots b}}\dots\underset{k_n}{\underbrace{b\dots b}}a)^2+ \mu(\underset{k_1}{\underbrace{b\dots b}}\underset{k_2}{\underbrace{a\dots a}}\dots\underset{k_n}{\underbrace{a\dots a}}b)^2\\
 &=&\sum_{k_1,\dots, k_n} \left(\mu(a)^{k_1}\mu(b)^{k_2}\dots\mu(b)^{k_n}\mu(a)\right)^2+  \left(\mu(b)^{k_1}\mu(a)^{k_2}\dots\mu(a)^{k_n}\mu(b)\right)^2\\
 &=&\sum_{k_1,\dots, k_n} p^{2k_1}(1-p)^{2k_2}\dots(1-p)^{2k_n}p^2+  (1-p)^{2k_1}p^{2k_2}\dots p^{2k_n}(1-p)^2\\
 &=&p^2\left(\sum_{k=1}^{+\infty} \left(p^{2}\right)^k\right)^{n/2}\left(\sum_{k=1}^{+\infty} \left((1-p)^{2}\right)^k\right)^{n/2}+(1-p)^2\left(\sum_{k=1}^{+\infty} \left(p^{2}\right)^k\right)^{n/2}\left(\sum_{k=1}^{+\infty} \left((1-p)^{2}\right)^k\right)^{n/2}\\
 &=&\left(p^2+(1-p)^2\right)\left(\frac{p^2}{1-p^2}\right)^{n/2}\left(\frac{(1-p)^2}{1-(1-p)^2}\right)^{n/2}.
 \end{eqnarray*}
 This implies that the R\'enyi entropy of the pushforward measure exists and we have
 \[H_2(f_*\mu)=-\displaystyle\underset{n \to \infty}{{\lim}}\frac{1}{n}\log\sum\limits_{C_n} f_*\mu(C_n)^2=-\frac{1}{2}\log\left(\frac{p(1-p)}{(1+p)(2-p)}\right).\]
 Finally, applying Theorem~\ref{thprinc}, we have for almost every realizations $\x,\y$
 \[\lim_{n \to \infty}\frac{\M_n(\x,\y)}{\log n} =\frac{4}{\log\left(\frac{(1+p)(2-p)}{p(1-p)}\right)}.\]
 
 \subsection{Markov chain with two states}\label{exmark1}
 Let us consider the alphabet $\mathcal{A}=\{a,b\}$ and the transition matrix $P=(p_{ij})_{i,j\in\mathcal{A}}$ with $p_{aa}=p$ and $p_{bb}=q$ where $0<p,q<1$. The stationary measure $\mu$ is $\psi$-mixing with exponential decay (see e.g. \cite{brad1}) and hypothesis (A) is satisfied (see e.g. \cite[Lemma 1]{GS}). Thus to apply our theorem we will compute the R\'enyi entropy of the pushforward measure.
 
As in the Bernoulli case, assuming that $n$ is even, cylinders of length $n$ can only have two forms, i.e. $C_n=(a,k_1)(b,k_2)(a,k_3)\dots(a,k_{n-1})(b,k_n)$ or $C_n=(b,k_1)(a,k_2)(b,k_3)\dots(b,k_{n-1})(a,k_n)$ with $k_1,\dots, k_n\in\N$. Thus, we have
 \begin{eqnarray*}
 \sum_{C_n} f_*\mu(C_n)^2
 &=&\sum_{k_1,\dots, k_n} \mu(\underset{k_1}{\underbrace{a\dots a}}\underset{k_2}{\underbrace{b\dots b}}\dots\underset{k_n}{\underbrace{b\dots b}}a)^2+ \mu(\underset{k_1}{\underbrace{b\dots b}}\underset{k_2}{\underbrace{a\dots a}}\dots\underset{k_n}{\underbrace{a\dots a}}b)^2\\
 &=&\sum_{k_1,\dots, k_n} \left(\mu(a)p_{aa}^{k_1-1}p_{ab}p_{bb}^{k_2-1}\dots p_{ab}p_{bb}^{k_n-1}p_{ba}\right)^2+  \left(\mu(b)p_{bb}^{k_1-1}p_{ba}p_{aa}^{k_2-1}\dots p_{ba}p_{aa}^{k_n-1}p_{ab}\right)^2\\
 &=&(\mu(a)^2+\mu(b)^2) \left(p_{ab}^{n/2}\right)^2 \left(p_{ba}^{n/2}\right)^2\left(\sum_{k=1}^{\infty}(p_{aa}^2)^{k-1}\right)^{n/2}\left(\sum_{k=1}^{\infty}(p_{bb}^2)^{k-1}\right)^{n/2}\\
 &=&(\mu(a)^2+\mu(b)^2) p_{ab}^n p_{ba}^n \left(\frac{1}{1-p_{aa}^2}\right)^{n/2}\left(\frac{1}{1-p_{bb}^2}\right)^{n/2}\\
 &=&(\mu(a)^2+\mu(b)^2)\left(\frac{1-p}{1+p}\right)^{n/2}\left(\frac{1-q}{1+q}\right)^{n/2}
 \end{eqnarray*}
and the R\'enyi entropy is
\[H_2(f_*\mu)=-\frac{1}{2}\log\left(\frac{(1-p)(1-q)}{(1+p)(1+q)}\right).\]
Applying Theorem~\ref{thprinc}, we have for almost every realizations $\x,\y$
 \[\lim_{n \to \infty}\frac{\M_n(\x,\y)}{\log n} =\frac{4}{\log\left(\frac{(1+p)(1+q)}{(1-p)(1-q)}\right)}.\]

\subsection{Markov chain with more than 2 states}\label{exmark2}

To study Markov chains with more than two states, we will use another strategy which cannot be used for two states. The idea is that when the original process $X$ is a Markov chain with finite alphabet, the process $f(X)$ is a Markov chain with infinite alphabet. However, when working with only two states, this process is not aperiodic preventing us to compute the R\'enyi entropy using the results of \cite{ciuperca} (which are based on Perron-Frobenius Theorem).


 Let us consider the alphabet $\mathcal{A}=\{\alpha_i\}_{1\leq i\leq N}$ and the transition matrix $P=(p_{ij})_{1\leq i,j\leq N}$ with $0<p_{ij}<1$ for every $1\leq i,j\leq N$. The stationary measure $\mu$ is $\psi$-mixing with exponential decay (see e.g. \cite{brad1}) and hypothesis (A) is satisfied (see e.g. \cite[Lemma 1]{GS}). Thus to apply our theorem we will compute the R\'enyi entropy of the pushforward measure.
 
 First of all, we observe that the process $f(X)$ is also a Markov chain on the alphabet $\mathcal{B}=\{(\alpha,k)\}_{\alpha\in\mathcal{A},k\in\N}$ with transition matrix $Q=(q_{(\alpha_i,k)(\alpha_j,\ell)})_{1\leq i,j\leq N, k,\ell \in\N}$ and initial distribution $\pi=(\pi((\alpha_i,k)))_{1\leq i\leq N, k \in\N}$.
By definition of the run-length encoder, we observe that for all $1\leq i\leq N$ and $k,\ell\in\N$
\[q_{(\alpha_i,k)(\alpha_i,\ell)}=\P\left(f(X)_{n+1}=(\alpha_i,\ell)|f(X)_{n}=(\alpha_i,k)\right)=0.\]
Moreover, for $i\neq j$ and $k,\ell\in\N$, we have
\begin{eqnarray*}
q_{(\alpha_i,k)(\alpha_j,\ell)}&=&\P\left(f(X)_{n+1}=(\alpha_j,\ell)|f(X)_{n}=(\alpha_i,k)\right)\\
&=&\mu(\underset{k}{\underbrace{\alpha_i\dots \alpha_i}}\underset{\ell}{\underbrace{\alpha_j\dots \alpha_j}}\alpha_j^c).\mu(\underset{k}{\underbrace{\alpha_i\dots \alpha_i}}\alpha_i^c)^{-1}
\end{eqnarray*}
where $\alpha^c$ can be any symbol in $\mathcal{A}\setminus\{\alpha\}$. Thus, we obtain
\begin{eqnarray*}
q_{(\alpha_i,k)(\alpha_j,\ell)}&=&\frac{p_{ij}p_{jj}^{\ell-1}(1-p_{jj})}{(1-p_{ii})}.
\end{eqnarray*}
Then, for all $1\leq i\leq N$ and $k\in\N$, we have
\[\pi((\alpha_i,k))=\P(f(X)_1=(\alpha_i,k))=\mu(\underset{k}{\underbrace{\alpha_i\dots \alpha_i}}\alpha_i^c)=\mu(\alpha_i)p_{ii}^{k-1}(1-p_{ii}).\]
Since, $f(X)$ is a Markov chain over a countable alphabet, we will compute ${H}_2(f_*\mathbb{P})$ using Theorem~2 in \cite{ciuperca}:

\begin{theorem}[Theorem 2 \cite{ciuperca}]\label{thciuperca}
Let $Y=(Y_n)_{n\in\N}$ be an irreducible and aperiodic Markov chain with denumerable state space $E$, transition matrix $Q=(q(i,j))_{(i,j)\in E^2}$ and initial distribution $\pi=(\pi(i))_{i\in E}$. If we have
\begin{enumerate}
\item[(1.A)] $\sup_{(i,j)\in E}q{(i,j)}<1$;
\item[(1.B)] there exists $\sigma_0<1$ such that for all $s>\sigma_0$
\[\sup_{i\in E}\sum_{j\in E}q{(i,j)}^s<\infty\]
and
\[\sum_{i\in E}\pi(i)^s<\infty;\]
\item[(1.C)] for all $\eps>0$ and all $s>\sigma_0$, there exists some $A\subset E$ with a finite number of elements, such that
\[\sup_{i\in E}\sum_{j\in\in E\setminus A}q{(i,j)}^s<\eps\]
\end{enumerate}
then for the Markov measure $\nu$ with initial distribution $\pi$ and transition matrix $Q$ and for $k>1$, the R\'enyi entropy of order $k$ exists and 
\[{H}_k(\nu)=-\log \lambda_k\]
where $\lambda_k$ is the largest positive eigenvalue of the matrix $Q_k=\left(\left(q{(i,j)}\right)^k\right)_{(i,j)\in E}$.
\end{theorem}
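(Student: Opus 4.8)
The plan is to reduce the statement to a spectral analysis of the nonnegative matrix $Q_k=((q(i,j))^k)_{(i,j)\in E^2}$. Since $\nu$ is Markov with initial law $\pi$ and transition $Q$, a length-$n$ cylinder is a word $\omega=(i_1,\dots,i_n)\in E^n$ with $\nu(\omega)=\pi(i_1)q(i_1,i_2)\cdots q(i_{n-1},i_n)$, so that, writing $\pi_k=(\pi(i)^k)_{i\in E}$ and $\mathbf 1$ for the all-ones vector, $\sum_{\omega\in E^n}\nu(\omega)^k=\langle\pi_k,Q_k^{\,n-1}\mathbf 1\rangle$. First I would use (1.B) with $s=k>1>\sigma_0$ to get $\pi_k\in\ell^1(E)$ and $\sup_i\sum_j q(i,j)^k<\infty$; the latter says $Q_k$ is bounded on $\ell^\infty(E)$ with norm equal to this supremum, so all the sums are finite and the task becomes to identify the exponential growth rate of $Q_k^{\,n}$.

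The key step is to show that $Q_k$, as a nonnegative operator on $\ell^\infty(E)$, is \emph{compact}. Given a finite $A\subset E$, let $Q_k^{(A)}$ be $Q_k$ with the columns outside $A$ zeroed out; its range lies in the span of the finitely many columns $((q(i,j))^k)_i$, $j\in A$ (each bounded by (1.A)), so $Q_k^{(A)}$ has finite rank, while $\|Q_k-Q_k^{(A)}\|_{\ell^\infty\to\ell^\infty}=\sup_i\sum_{j\notin A}q(i,j)^k$, which (1.C) with $s=k$ makes arbitrarily small through a suitable choice of $A$. Hence $Q_k$ is a norm limit of finite-rank operators, therefore compact, and Riesz--Schauder theory gives that its spectrum is $\{0\}$ together with at most countably many eigenvalues of finite multiplicity accumulating only at $0$. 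Because $x\mapsto x^k$ preserves the pattern of zero and nonzero entries, $Q_k$ inherits irreducibility and aperiodicity from $Q$, and Perron--Frobenius/Krein--Rutman theory for compact positive operators then yields that the spectral radius $\lambda_k$ is strictly positive, is an algebraically simple eigenvalue with a strictly positive right eigenvector $h$ and a strictly positive left eigenvector $\ell$, and is the only point of the peripheral spectrum (aperiodicity being what excludes other eigenvalues of modulus $\lambda_k$).

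It then remains to feed this into the cylinder sum. Writing $P$ for the rank-one spectral projection onto the $\lambda_k$-eigenspace and $Q_k=\lambda_kP+N$ with $PN=NP=0$ and $\rho(N)<\lambda_k$, we get $Q_k^{\,n}=\lambda_k^{\,n}P+N^{\,n}$ with $\|N^{\,n}\|^{1/n}\to\rho(N)$, so
\[\sum_{\omega\in E^n}\nu(\omega)^k=c\,\lambda_k^{\,n-1}+\langle\pi_k,N^{\,n-1}\mathbf 1\rangle,\qquad c=\langle\pi_k,h\rangle\,\langle\ell,\mathbf 1\rangle>0,\]
positivity of $c$ following from the strict positivity of $h,\ell,\mathbf 1$ together with $\pi_k\ge 0$, $\pi_k\neq 0$. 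Taking logarithms and dividing by $n$, the second term is of strictly smaller exponential order, so $\tfrac1n\log\sum_{\omega\in E^n}\nu(\omega)^k\to\log\lambda_k$; thus the limit defining the R\'enyi entropy of order $k$ exists, and in the normalization used in \cite{ciuperca} it equals $-\log\lambda_k$ (with the normalization of Definition~\ref{defren} one gets $-\tfrac1{k-1}\log\lambda_k$; the two coincide at the order $k=2$ applied in the examples).

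The main obstacle, and precisely what assumptions (1.A)--(1.C) are designed to overcome, is the functional-analytic set-up: one must exhibit a Banach space on which $Q_k$ is at once bounded, compact, and carries a strictly positive eigenvector, and then verify the full Perron--Frobenius package --- positivity and simplicity of $\lambda_k$, triviality of the rest of the peripheral spectrum --- in an infinite-dimensional, non-self-adjoint setting. Here (1.A) and (1.B) furnish boundedness and the integrability of $\pi_k$, (1.C) is exactly the uniform tail condition that upgrades boundedness to compactness via finite-rank truncation, and irreducibility and aperiodicity of the chain supply the Perron--Frobenius structure; making this rigorous --- in particular checking that the eigenvector $h$ genuinely lies in the chosen space and that the left eigenvector pairs nontrivially with both $\mathbf 1$ and $\pi_k$ --- is where the argument of \cite{ciuperca} does its work.
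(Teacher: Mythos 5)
You should first note that the paper offers no proof of this statement at all: it is quoted verbatim as Theorem~2 of \cite{ciuperca} and used as a black box in Example~\ref{exmark2}, so there is no internal argument to compare yours against. Judged on its own terms, your reconstruction is a coherent and essentially sound proof strategy: the cylinder-sum identity $\sum_{\omega}\nu(\omega)^k=\langle\pi_k,Q_k^{\,n-1}\mathbf 1\rangle$ is correct, the finite-rank truncation of columns does show that (1.C) upgrades the $\ell^\infty$-boundedness given by (1.B) to compactness, and irreducibility plus aperiodicity of $Q_k$ (inherited from $Q$ because $x\mapsto x^k$ preserves the zero pattern) is exactly what one needs to run a Krein--Rutman/Perron--Frobenius argument and extract a spectral gap. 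Your observation about the normalization is also correct and worth keeping: with Definition~\ref{defren} the limit $\tfrac1n\log\sum_\omega\nu(\omega)^k\to\log\lambda_k$ gives $H_k(\nu)=-\tfrac1{k-1}\log\lambda_k$, which agrees with the displayed formula only at $k=2$ --- the only order actually used in the paper.

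Two technical points deserve more care than your sketch gives them. First, having placed $Q_k$ on $\ell^\infty(E)$, the left eigenvector $\ell$ a priori lives in $(\ell^\infty)^*$, which is strictly larger than $\ell^1$; the row-truncation argument that would give compactness of the pre-adjoint on $\ell^1$ is \emph{not} available from (1.A)--(1.C), since nothing forces the row sums $\sum_j q(i,j)^k$ to decay in $i$. The argument still closes because a nonzero positive functional on $\ell^\infty$ automatically satisfies $\langle\ell,\mathbf 1\rangle=\|\ell\|>0$, but you should say this rather than treat $\ell$ as a summable sequence. Second, the ``full Perron--Frobenius package'' for compact, irreducible, positive, non-self-adjoint operators on a Banach lattice (simplicity of $\lambda_k$, strict positivity of the eigenvector, triviality of the peripheral spectrum under aperiodicity) is a genuine theorem, not a formality; you acknowledge this but it is precisely where the hypotheses (1.A)--(1.C) are consumed, and the actual argument of \cite{ciuperca} is organized around the analytic behaviour of the family $s\mapsto Q_s$ rather than around compactness of the single operator $Q_k$. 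So your route is plausible and probably different from the cited one, but as written it is a program with the hardest lemma outsourced rather than a complete proof.
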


First, we observe that $f(X)$ is irreducible and aperiodic since $0<p_{ij}<1$ for every $1\leq i,j\leq N$.

We also observe that if the alphabet $\mathcal{A}$ as only two symbols, $f(X)$ is periodic of period 2, thus we cannot apply \cite{ciuperca}.

We will now check the other assumptions to apply their results. Since $0<p_{ij}<1$ for every $1\leq i,j\leq N$, we have
\[\sup_{1\leq i,j\leq N, k,\ell \in\N}q_{(\alpha_i,k)(\alpha_j,\ell)}=\sup_{1\leq i,j\leq N, k,\ell \in\N}\frac{p_{ij}p_{jj}^{\ell-1}(1-p_{jj})}{(1-p_{ii})}
\leq \sup_{1\leq j\leq N}(1-p_{jj})<1\]
and Assumption 1.A is satisfied.

Let $s>0$. We have for any $1\leq i,j\leq N$ and $k\in\N$
\begin{eqnarray*}
\sum_{\ell\in\N}q_{(\alpha_i,k)(\alpha_j,\ell)}^s&=&\sum_{\ell\in\N}\left(\frac{p_{ij}p_{jj}^{\ell-1}(1-p_{jj})}{(1-p_{ii})}\right)^s\\
&=&\left(\frac{p_{ij}(1-p_{jj})}{(1-p_{ii})}\right)^s\frac{1}{1-p_{jj}^s}.
\end{eqnarray*}
Thus,
\begin{eqnarray}
\sup_{1\leq i\leq N, k\in\N}\sum_{1\leq j\leq N,\ell\in\N}q_{(\alpha_i,k)(\alpha_j,\ell)}^s&=&\sup_{1\leq i\leq N}\sum_{1\leq j\leq N}\sum_{\ell\in\N}q_{(\alpha_i,k)(\alpha_j,\ell)}^s\nonumber\\
&=&\sup_{1\leq i\leq N}\sum_{1\leq j\leq N}\left(\frac{p_{ij}(1-p_{jj})}{(1-p_{ii})}\right)^s\frac{1}{1-p_{jj}^s}<+\infty.\label{1B1}
\end{eqnarray}
Moreover, 
\begin{eqnarray}
\sum_{1\leq i\leq N, k\in\N}\pi((\alpha_i,k))^s&=&\sum_{1\leq i\leq N, k\in\N}\left(\mu(\alpha_i)p_{ii}^{k-1}(1-p_{ii})\right)^s\nonumber\\
&=&\sum_{1\leq i\leq N}\frac{\mu(\alpha_i)^s(1-p_{ii})^s}{1-p_{ii}^s}<+\infty.\label{1B2}
\end{eqnarray}
\eqref{1B1} and \eqref{1B2} imply that Assumption 1.B is satisfied.

Let $\eps>0$, $s>0$ and define
\[M=\sup_{1\leq i\leq N}\sum_{1\leq j\leq N}\left(\frac{p_{ij}(1-p_{jj})}{(1-p_{ii})}\right)^s.\]
Since $0<p_{ij}<1$ for every $1\leq i,j\leq N$, it exists $m\in \N$ such that for all $1\leq j\leq N$, we have
\[\frac{p_{jj}^{ms}}{1-p_{jj}^s}<\frac{\eps}{M}.\]
Let $A=\{(\alpha_i,k)\}_{1\leq i\leq N,1\leq k<m}$. We observe that $A$ has a finite number of elements and that
\begin{eqnarray}
\sup_{1\leq i\leq N, k\in\N}\sum_{(\alpha_j,\ell)\in\mathcal{B}\setminus A}q_{(\alpha_i,k)(\alpha_j,\ell)}^s&=&\sup_{1\leq i\leq N}\sum_{1\leq j\leq N}\sum_{\ell=m}^\infty q_{(\alpha_i,k)(\alpha_j,\ell)}^s\nonumber\\
&=&\sup_{1\leq i\leq N}\sum_{1\leq j\leq N}\left(\frac{p_{ij}(1-p_{jj})}{(1-p_{ii})}\right)^s\frac{p_{jj}^{ms}}{1-p_{jj}^s}\nonumber\\
&<&M.\frac{\eps}{M}=\eps.\nonumber
\end{eqnarray}
Thus, Assumption 1.C is satisfied.

Finally, since $f(X)$ satisfies all the assumptions of Theorem~\ref{thciuperca}, ${H}_2(f_*\mu)$ exists and 
\[{H}_2(f_*\mu)=-\log \lambda\]
where $\lambda$ is the largest positive eigenvalue of the matrix $Q_2=\left(\left(q_{(\alpha_i,k)(\alpha_j,\ell)}\right)^2\right)_{1\leq i,j\leq N, k,\ell \in\N}$.

Applying Theorem~\ref{thprinc}, we have for almost every realizations $\x,\y$
 \[\lim_{n \to \infty}\frac{\M_n(\x,\y)}{\log n} =\frac{2}{-\log \lambda}.\]

\section{Proof of the main results}\label{secproof}
To prove our theorems, a natural tool one would like to use is the stationarity of the measure. Since $f_* \mu$ is not stationary, we will need to use the stationarity of $\mu$. In order to do so, we will introduce a new object 
$$
\tilde{M}_n(\x,\y)= \displaystyle \max \left\{k \ : f(\sigma^i\x)_1^{k} = f(\sigma^i\y)_1^{k} \ \mbox{for some} \ 0 \leq i,j \leq n-k\right\}.
$$
First of all, we will explain how $\tilde{M}_n$ and $\M_n$ are related.
\begin{lemma}For every sequences $\x,\y$ and every $n\in\N$
\begin{equation}\label{eqmngeq}
\M_n(\x,\y)\geq \tilde{M}_n(\x,\y)-1.
\end{equation}
Moreover, if $|f(\x_1^n)|\geq u(n)$ and $|f(\y_1^n)|\geq u(n)$ for some $u(n)\in\N$ then
\begin{equation}\label{eqmnleq}
\M_{u(n)}(\x,\y)\leq \tilde{M}_n(\x,\y).
\end{equation}
\end{lemma}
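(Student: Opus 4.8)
The plan is to prove the two inequalities of the lemma separately by carefully unpacking the definitions of $\M_n$ and $\tilde M_n$ and keeping track of one boundary effect in the run-length encoding.

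For \eqref{eqmngeq}, suppose $\tilde M_n(\x,\y)=k$, so there exist $i,j$ with $0\le i,j\le n-k$ such that $f(\sigma^i\x)_1^k=f(\sigma^i\y)_1^k$ (I would first note the typo/intention that the second index is $j$, i.e. $f(\sigma^i\x)_1^k=f(\sigma^j\y)_1^k$). The point is that $f(\sigma^i\x)_1^k$ is \emph{almost} a subword of $f(\x)$: it agrees with a window of $f(\x)$ except that its first entry $(\alpha_1,m_1)$ may have a shortened run-length, because the occurrence of $\sigma^i\x$ starts in the middle of a run of $\x$. Discarding that first (possibly corrupted) coordinate, the remaining $k-1$ coordinates $f(\sigma^i\x)_2^k$ form a genuine factor of $f(\x)$ at some position, and likewise for $\y$; since they coincide, $f(\x)$ and $f(\y)$ share a common substring of length $k-1$, and the starting positions are at most $n-(k-1)$ because $i,j\le n-k$. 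Hence $\M_n(\x,\y)\ge k-1=\tilde M_n(\x,\y)-1$.

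For \eqref{eqmnleq}, assume $|f(\x_1^n)|\ge u(n)$ and $|f(\y_1^n)|\ge u(n)$, and let $\M_{u(n)}(\x,\y)=k$, witnessed by a common substring of $f(\x)$ and $f(\y)$ of length $k$ occurring at positions $\le u(n)-k$ in each. The idea is to translate a position in the \emph{encoded} sequence back to a position in the \emph{original} sequence: if the common block of $f(\x)$ starts at the $p$-th coordinate of $f(\x)$, then there is an index $i$ (the offset in $\x$ where the $p$-th run begins) with $f(\sigma^i\x)_1^k$ equal to that block; the hypothesis $|f(\x_1^n)|\ge u(n)\ge p+k-1$ guarantees $i\le n-k$ so the offset is legal within the window of length $n$. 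Doing the same for $\y$ produces $j\le n-k$ with $f(\sigma^i\x)_1^k=f(\sigma^j\y)_1^k$, whence $\tilde M_n(\x,\y)\ge k=\M_{u(n)}(\x,\y)$.

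The main obstacle is the bookkeeping around the first coordinate of $f(\sigma^i\x)$: when $\sigma^i$ cuts into the interior of a run, the encoder sees a run-length that is strictly smaller than the one recorded in $f(\x)$, so the first coordinate of $f(\sigma^i\x)_1^k$ need not appear in $f(\x)$ at all — this is exactly why \eqref{eqmngeq} loses a $1$ while \eqref{eqmnleq} does not (in the latter direction we go from a true factor of $f(\x)$ to a shift of $\x$, and a true factor's first run-length is honest). Once this asymmetry is isolated, both inequalities follow from elementary index chasing, and I would present it as two short paragraphs with a picture-free but explicit correspondence between coordinates of $f(\x)$ and shift-offsets $i$.
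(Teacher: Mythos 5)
Your proof follows essentially the same route as the paper's: for the first inequality you discard the (possibly truncated) first coordinate of $f(\sigma^i\x)_1^k$ and match the remaining $k-1$ coordinates as a genuine factor of $f(\x)$ (this is exactly where the paper loses the $1$), and for the second you lift a length-$k$ factor of $f(\x)$ back to the shift $\sigma^{i}\x$ starting at the beginning of the corresponding run, using $|f(\x_1^n)|\geq u(n)$ to keep the offsets within the admissible range. The argument is correct and is the same as the one in the paper.
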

\begin{proof}
For two sequences $\x,\y$, assume that $\tilde{M}_n(\x,\y)=\ell$. Thus, by definition, there exist $0 \leq i,j \leq n-\ell$ such that $f(\sigma^i\x)_1^{\ell} = f(\sigma^i\y)_1^{\ell}$.

Moreover, by definition of the run-length encoder $f$, we observe that the cylinder $f(\sigma^i\x)_2^{\ell}$ always appears at some position in the cylinder $f(\x)_1^n$, more precisely it exists $0\leq i'\leq n-\ell$ such that $f(\sigma^i\x)_2^{\ell}=f(\x)_{i'}^{i'+\ell-2}$. Identically, it exists $0\leq j'\leq n-\ell$ such that $f(\sigma^j\y)_2^{\ell}=f(\y)_{j'}^{j'+\ell-2}$.

Thus $f(\x)_{i'}^{i'+\ell-2}=f(\y)_{j'}^{j'+\ell-2}$ which implies that $\M_n(\x,\y)\geq\ell-1=\tilde{M}_n(\x,\y)-1$ and \eqref{eqmngeq} is proved.

Now, assume that if $|f(\x_1^n)|\geq u(n)$ and $|f(\y_1^n)|\geq u(n)$ for some $u(n)\in\N$. One can observe that by definition of the run-length encoder, we must have $u(n)\leq n$. Let us assume that $\M_{u(n)}(\x,\y)=\ell$. Thus, by definition, there exist $0 \leq i,j \leq u(n)-\ell$ such that $f(\x)_i^{i+\ell-1} = f(\y)_j^{j+\ell-1}$.

Moreover, since $|f(\x_1^n)|\geq u(n)$, it exists $0 \leq i' \leq n-\ell$ such that $f(\x)_i^{i+\ell-1} =f(\sigma^{i'}\x)_1^\ell$. Identically, it exists $0 \leq j' \leq n-\ell$ such that $f(\y)_j^{j+\ell-1} =f(\sigma^{j'}\y)_1^\ell$.

Thus, $f(\sigma^{i'}\x)_1^\ell=f(\sigma^{j'}\y)_1^\ell$ which implies that $\tilde{M}_n(\x,\y)\geq \ell =\M_{u(n)}(\x,\y)$ and \eqref{eqmnleq} is proved.

\end{proof}

\begin{proof}[Proof of Theorem~\ref{thinf}]

The proof of this theorem follows in part the lines of the proof of the Theorem 7 in \cite{BaLiRo}, nevertheless some subtle modifications are necessary since $f_* \mu$ is not stationary.

Let $\eps >0$ and denote
$$k_n=\left\lceil\frac{2 \log n + \log \log n}{\underline{H_2}(f_*\mathbb{P})-\eps}\right\rceil.$$
We define $u(n)=\frac{n}{\log n^\delta}$ with $\delta>\frac{1}{h}$ and 
\[V_n=\{\x, |f(\x_1^n)|\geq u(n)\}.\]
First of all, using \eqref{eqmnleq}, we observe that  
\begin{eqnarray}
\P(\M_{u(n)}\geq k_n)&\leq& \P((V_n\times V_n)\cap \M_{u(n)}\geq k_n)+\P((V_n\times V_n)^c)\nonumber\\
&\leq& \P(\tilde{M}_n\geq k_n)+3\mu(V_n^c).\label{ineqmnvnc}
\end{eqnarray}

Firstly, we will estimate $\P(\tilde{M}_n\geq k_n)$. For $0\leq i,j\leq n-1$ we define the following event
$$A_{i,j}=\{f(\sigma^i X)_{1}^{k_n}=f(\sigma^j Y)_{1}^{k_n}\}$$
and the following random variable
\begin{equation}\label{defsn}
S_n= \sum_{i,j= 0, \ldots, n-1}\mathbbm{1}_{A_{i,j}}.
\end{equation}

It follows from our definitions and Markov's inequality that
$$\mathbb{P} \lt(\tilde M_n \geq k_n\rt)=\mathbb{P} \lt(S_n \geq1\rt)  \leq  \E\lt(S_n\rt).$$
Moreover, we use the stationarity of $\mu$ to get
\begin{eqnarray}
\E\lt(S_n\rt)&=&\sum_{0 \leq i,j \leq n-1}\sum_{\omega\in\mathcal{B}^{k_n}}\P\left(f(\sigma^iX)_{1}^{k_n}=f(\sigma^j Y)_{1}^{k_n}=\omega\right)\nonumber\\
&=&\sum_{0 \leq i,j \leq n-1}\sum_{\omega\in\mathcal{B}^{k_n}}\mu\left(f(\sigma^iX)_{1}^{k_n}=\omega\right)\mu\left(f(\sigma^j Y)_{1}^{k_n}=\omega\right)\nonumber\\
&=&n^2\sum_{\omega\in\mathcal{B}^{k_n}}\mu\left(f^{-1}\omega\right)^2=n^2\sum_{\omega\in\mathcal{B}^{k_n}}f_*\mu\left(\omega\right)^2.\label{eqesn}
\end{eqnarray}

Thus, for $n$ large enough, by definition of $\underline{H}_2(f_*\mathbb{P})$ and of $k_n$, we have
\begin{equation}\label{eqmntilde}
\mathbb{P} \lt(\tilde M_n \geq k_n\rt)\leq n^2\sum_{\omega\in\mathcal{B}^{k_n}}f_*\mu\left(\omega\right)^2\leq n^2 e^{-k_n (\underline{H}_2(f_*\mathbb{P})-\eps)} \leq\frac{1}{\log n} .
\end{equation}

Let us now estimate $\mu(V_n^c)=\{\x, |f(\x_1^n)|< u(n)\}$. Let $\x\in V_n^c$, by definition of the run-length encoder, one can notice that since $|f(\x_1^n)|< u(n)$, it exists $a\in\mathcal{A}$ such that the cylinder $a...a$ of length $\left\lceil\frac{n}{u(n)}\right\rceil$ appears at some position in the cylinder $\x_1^n$, more precisely it exists $0\leq i\leq n-\left\lceil\frac{n}{u(n)}\right\rceil$ such that $a...a=\x_{i+1}^{i+\lceil{n}/{u(n)}\rceil}$. Thus, we obtain
\begin{eqnarray}
\mu(V_n^c)&\leq&\mu\left(\bigcup_{a\in\mathcal{A}}\bigcup_{0\leq i\leq n-\left\lceil\frac{n}{u(n)}\right\rceil}\sigma^{-i}a\dots a\right)\label{ineqvnca}\\
&\leq&\sum_{a\in\mathcal{A}}\sum_{0\leq i\leq n-\left\lceil\frac{n}{u(n)}\right\rceil}\mu\left(\sigma^{-i}a\dots a\right)\nonumber\\
&=&\left(n-\left\lceil\frac{n}{u(n)}\right\rceil+1\right)\sum_{a\in\mathcal{A}}\mu\left(a\dots a\right).\nonumber
\end{eqnarray}
Thus, using assumption (A) and since $u(n)=\frac{n}{\log n^\delta}$ with $\delta>\frac{1}{h}$, we obtain
\begin{equation}\label{eqvnc}
\mu(V_n^c)\leq c |\mathcal{A}| n e^{-h n/u(n)}\leq c|\mathcal{A}|\frac{1}{n}
\end{equation}
where $|\mathcal{A}|$ denotes the cardinality of $\mathcal{A}$.

Thus, combining \eqref{ineqmnvnc}, \eqref{eqmntilde} and \eqref{eqvnc}, we obtain
\[\P(\M_{u(n)}\geq k_n)\leq\mathcal{O}\left((\log n)^{-1}\right).\]

Choosing a subsequence $(n_{\ell})_{\ell \in \mathbb{N}}$ such that $n_{\ell}= \lceil e^{\ell^2}\rceil$ we have that $\sum_\ell\mathbb{P} \lt(\M_{u(n_\ell)} \geq k_{n\ell}\rt)<+\infty$. Thus, by the Borel-Cantelli lemma, we have almost surely, if $\ell$ is large enough,

$$\M_{u(n_{\ell})} < k_{n_{\ell}}$$
and  then
\begin{eqnarray*}\label{desigbc}
\frac{ \M_{u(n_{\ell})}}{\log n_{\ell}} \leq  \frac{1}{\underline{H}_2(f_*\mu)-\eps}\lt( 2 + \frac{1+\log \log n_{\ell}}{\log n_{\ell}}\rt).
\end{eqnarray*}

Taking the limit superior in this inequality and observing that $(\M_{n})_n$, $(u(n))_n$ and $(n_\ell)_\ell$ are increasing, that $\lim\limits_{\ell \to \infty} \dfrac{\log n_{\ell}}{ \log n_{\ell+1}}=1$ and that $\lim\limits_{n \to \infty} \dfrac{\log u(n)}{ \log n}=1$, we obtain almost surely

$$
\underset{n \rightarrow \infty}{\underline{\lim}} \frac{\M_n}{\log n} = \underset{\ell \rightarrow \infty}{\underline{\lim}} \frac{ \M_{u(n)}}{\log n} = \underset{\ell \rightarrow \infty}{\underline{\lim}} \frac{ \M_{u(n_\ell)}}{\log n_{\ell}} \leq \frac{2}{\underline{H}_{2}(f_*\mathbb{P})-\eps}.
$$
And the theorem is proved since $\eps$ can be chosen arbitrarily small.

\end{proof}

 \begin{proof}[Proof of Theorem~\ref{thprinc}]
 We will proved here the theorem when the process is $\alpha$-mixing with exponential decay, the $\psi$-mixing case can be obtained similarly by a slight modification. Without loss of generality, we will also assume that $\alpha(g)=e^{-g}$.
 
Let $\eps>0$ and define
$$k_n=\left\lfloor\frac{2 \log n + b \log \log n}{\overline{H_2}(f_*\mu)+\eps}\right\rfloor$$
where $b$ is a constant to be chosen.

First of all, using \eqref{eqmngeq}  and \eqref{defsn}, we observe that
\[\mathbb{P}(\M_n<k_n-1)\leq \mathbb{P}(\tilde{M}_n<k_n)=\mathbb{P}(S_n=0)\]
thus, by Chebyshev's inequality
\begin{equation}\label{eqcheb}
\mathbb{P} \lt(\M_n < k_n-1\rt) \leq  \frac{\textrm{var} \lt(S_n\rt)}{\E\lt(S_n\rt)^2}.
\end{equation}

For the variance of $S_n$, we use the following lemma (which will be proved after the proof of the theorem):
\begin{lemma}\label{lemvar}
Under the assumptions of Theorem~\ref{thprinc}, for $g\in \N$, we have
\begin{eqnarray*}
\var(S_n)&\leq& 4(g+k_n)\E(S_n)^{3/2}+4(g+k_n)^2\E(S_n)+2n^4\alpha(g+k_n-k_n^3)\\
& &+4n^3(g+k_n)\alpha(g+k_n-k_n^3)
+\left(2n^4+4n^3(g+k_n)\right)ce^{-hk_n^2}.
\end{eqnarray*}
\end{lemma}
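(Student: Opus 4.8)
The plan is to carry out the familiar second-moment estimate for $S_n$, as in \cite{BaLiRo,barrosrousseau}, writing $\var(S_n)=\sum_{(i,j),(i',j')}\big(\P(A_{i,j}\cap A_{i',j'})-\P(A_{i,j})\P(A_{i',j'})\big)$ with $A_{i,j}=\{f(\sigma^iX)_1^{k_n}=f(\sigma^jY)_1^{k_n}\}$, and splitting the quadruples $(i,i',j,j')$ according to whether $|i-i'|\le g+k_n$ and whether $|j-j'|\le g+k_n$. When \textbf{both pairs are close}, nothing fancy is needed: bound the covariance by $\P(A_{i,j})=\sum_\omega f_*\mu(\omega)^2=\E(S_n)/n^2$, the first equality using $X\perp Y$ and stationarity of $\mu$ exactly as in \eqref{eqesn}; there are $\mathcal{O}(n^2(g+k_n)^2)$ such quadruples, contributing the term $4(g+k_n)^2\E(S_n)$.

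For the two cases involving a far pair I would introduce a truncation forced on us by the run-length encoder: unlike the matching events in \cite{BaLiRo}, $A_{i,j}$ is not measurable with respect to any finite block of $X$ and $Y$, since $f$ may have to read arbitrarily far along $\sigma^iX$ before its first $k_n$ runs are determined. However, if those first $k_n$ runs occupy more than $k_n^3$ symbols then one of them has length at least $k_n^2$, so $X$ contains a constant block of length $k_n^2$ beginning at one of the $\mathcal{O}(k_n^3)$ positions in $[i+1,i+k_n^3]$; a union bound with hypothesis (A) bounds the probability of this ``bad'' event $G_i$ by a quantity of order $e^{-hk_n^2}$. Let $\hat A_{i,j}$ be the event obtained by running $f$ only on $X_{i+1}^{i+k_n^3}$ and $Y_{j+1}^{j+k_n^3}$; then $\P(A_{i,j}\triangle\hat A_{i,j})\le\P(G_i)+\P(G_j)$, and replacing $\mathbbm{1}_{A_{i,j}}$ by $\mathbbm{1}_{\hat A_{i,j}}$ changes each covariance by at most a bounded multiple of the four bad-event probabilities involved. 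When \textbf{exactly one pair is far}, say $|i-i'|\le g+k_n$ and $|j-j'|>g+k_n$ (the other case being symmetric), the two length-$k_n^3$ blocks in $Y$ are separated by a gap of at least $g+k_n-k_n^3$; $\alpha$-mixing in the $Y$-coordinate together with $X\perp Y$ bounds $\P(\hat A_{i,j}\cap\hat A_{i',j'})$ by $\E_X\!\big[f_*\mu\big(f(\sigma^iX)_1^{k_n}\big)f_*\mu\big(f(\sigma^{i'}X)_1^{k_n}\big)\big]+\alpha(g+k_n-k_n^3)$, and Cauchy--Schwarz in $X$, stationarity, and the elementary inequality $\sum_\omega f_*\mu(\omega)^3\le\big(\sum_\omega f_*\mu(\omega)^2\big)^{3/2}$ turn the first term into $(\E(S_n)/n^2)^{3/2}$. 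Since there are $\mathcal{O}(n^3(g+k_n))$ such quadruples, this contributes the terms $4(g+k_n)\E(S_n)^{3/2}$, $4n^3(g+k_n)\alpha(g+k_n-k_n^3)$, and, from the truncation, $4n^3(g+k_n)ce^{-hk_n^2}$.

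When \textbf{both pairs are far}, all four length-$k_n^3$ blocks are pairwise separated by gaps at least $g+k_n-k_n^3$, and since $X\perp Y$ their joint law is the law of the two $X$-blocks tensored with the law of the two $Y$-blocks; applying the $\alpha$-mixing inequality once in each coordinate to the $[0,1]$-valued indicators yields $\big|\P(\hat A_{i,j}\cap\hat A_{i',j'})-\P(\hat A_{i,j})\P(\hat A_{i',j'})\big|\le 2\alpha(g+k_n-k_n^3)$, so these at most $n^4$ quadruples contribute $2n^4\alpha(g+k_n-k_n^3)$ together with the truncation error $2n^4ce^{-hk_n^2}$. Summing the three groups of contributions gives the asserted bound.

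I expect the genuinely delicate point to be the truncation: one must pin down exactly which finite block of $X$ determines $A_{i,j}$ outside a negligible event, and verify that this negligible event has probability of order $e^{-hk_n^2}$ rather than only $e^{-hk_n}$ --- this is precisely where hypothesis (A) enters and where the argument diverges from \cite{CoLaRo}, whose encoders cannot compress as drastically as $f$. A minor secondary point is keeping the two scales straight (block length $k_n^3$ versus mixing gap $g$, so that $\alpha$ is consistently evaluated at $g+k_n-k_n^3$, which is taken positive when the lemma is applied in the proof of Theorem~\ref{thprinc} by choosing $g$ large) and checking that passing from $A_{i,j}$ to $\hat A_{i,j}$ never inflates the relevant intersection probabilities by more than the error already accounted for.
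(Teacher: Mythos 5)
Your proposal is correct and follows essentially the same route as the paper's proof: the same near/far decomposition of the quadruples $(i,i',j,j')$ with threshold $g+k_n$, the same truncation of the matching events at preimage length $k_n^3$ controlled by hypothesis (A) at scale $e^{-hk_n^2}$, the same $\alpha$-mixing step with gap $g+k_n-k_n^3$ applied once or twice according to the case, and the same Cauchy--Schwarz plus subadditivity of $x\mapsto x^{3/2}$ argument producing the $\E(S_n)^{3/2}$ term, with identical counts of quadruples in each regime. The only (harmless) difference is in how the truncation is implemented: you pass to truncated events $\hat A_{i,j}$ and bound the bad event by a union bound over the $\mathcal{O}(k_n^3)$ possible starting positions of a run of length $k_n^2$, which yields a bound of order $k_n^3e^{-hk_n^2}$ rather than the stated $ce^{-hk_n^2}$, whereas the paper restricts the sum over cylinders to $\mathcal{Z}_n=\{\omega:|f^{-1}\omega|\le k_n^3\}$ and bounds $\sum_{\omega\notin\mathcal{Z}_n}\mu(f^{-1}\omega)^2\le ce^{-hk_n^2}\sum_{\omega}\mu(f^{-1}\omega)\le ce^{-hk_n^2}$, avoiding the polynomial factor; this discrepancy only affects the constant and is immaterial for the application in Theorem~\ref{thprinc}.
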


Since $k_n=\mathcal{O}(\log n)$, for $\beta>3$ and $g=(\log n)^\beta$ we have
\[(g+k_n-k_n^3)\sim(\log n)^{\beta}.\]
 Thus, since $\alpha(g)=e^{-g}$ and since $\log (n^5)=o\left((\log n)^\beta\right)$, we obtain

\begin{equation}\label{eqvar2}
2n^4 \alpha(g+k_n-k_n^2)=\mathcal{O}(n^{-1})
\end{equation}
and
\begin{equation}\label{eqvar3}
4n^3(g+k_n)\alpha(g+k_n-k_n^2)=\mathcal{O}(n^{-1}).
\end{equation}
By definition of $k_n$, for $n$ large enough we have $hk_n^2\geq 5\log n$, thus we obtain
\begin{equation}\label{eqvar1}
\left(2n^4+4n^3(g+k_n)\right)ce^{-hk_n^2}=\mathcal{O}(n^{-1}).
\end{equation}

Thus, combining \eqref{eqcheb} together with Lemma~\ref{lemvar}, \eqref{eqvar2},  \eqref{eqvar3} and \eqref{eqvar1}, we have
$$\mathbb{P} \lt(\M_n < k_n-1\rt) \leq  \frac{\var \lt(S_n\rt)}{\E\lt(S_n\rt)^2}\leq \frac{4(g+k_n)}{\E\lt(S_n\rt)^{1/2}}+\frac{4(g+k_n)^2}{\E\lt(S_n\rt)}+\mathcal{O}(n^{-1}).$$
By \eqref{eqesn} and by definitions of $k_n$ and the R\'enyi entropy, we have
\[\E\lt(S_n\rt)=n^2\sum_{\omega\in\mathcal{B}^{k_n}}f_*\mu\left(\omega\right)^2\geq n^2 e^{-k_n(\underline{H}_2(f_*\mu)+\eps)}\geq (\log n)^{-b}\]
and recalling that $(g+k_n)\sim(\log n)^{\beta}$, one can choose $b\ll -1$ to obtain
\[\mathbb{P} \lt(\M_n < k_n-1\rt) \leq \mathcal{O}\left((\log n)^{-1}\right).\]

Choosing a subsequence $(n_{\ell})_{\ell \in \mathbb{N}}$ such that $n_{\ell}= \lceil e^{\ell^2}\rceil$ we have that $\sum_\ell\mathbb{P} \lt(\M_{n_\ell} < k_{n\ell}-1\rt)<+\infty$. Thus, by the Borel-Cantelli lemma, we have almost surely, if $\ell$ is large enough, 
$$\M_{n_\ell} \geq k_{n\ell}-1$$
and  then
\begin{eqnarray*}\label{desigbc}
\frac{ \M_{n_\ell}}{\log n_{\ell}} \geq  \frac{1}{\overline{H}_2(f_*\mu)+\eps}\lt( 2 + b\frac{1+\log \log n_{\ell}}{\log n_{\ell}}\rt)-\frac{1}{\log n_{\ell}}.
\end{eqnarray*}
Taking the limit inferior in this inequality and observing that $(\M_{n})_n$ and $(n_\ell)_\ell$ are increasing and that $\lim\limits_{\ell \to \infty} \dfrac{\log n_{\ell}}{ \log n_{\ell+1}}=1$, we obtain almost surely
$$
\underset{n \rightarrow \infty}{\underline{\lim}} \frac{\M_n}{\log n} = \underset{\ell \rightarrow \infty}{\underline{\lim}} \frac{ \M_{n_\ell}}{\log n_{\ell}} \geq \frac{2}{\overline{H}_{2}(f_*\mu)+\eps}.
$$
And the theorem is proved since $\eps$ can be chosen arbitrarily small.

\end{proof}

\begin{proof}[Proof of Lemma~\ref{lemvar}]
To estimate the variance of $S_n$, we observe that
\begin{eqnarray}\label{vardis}
\var\lt(S_n\rt) &  =  & \sum_{0 \leq i,i',j,j' \leq n-1}\E\left(\mathbbm{1}_{A_{i,j}}\mathbbm{1}_{A_{i',j'}}\right)-\E\lt(S_n\rt)^2.
\end{eqnarray}

Let $g\in\N$. Firstly, we assume that $i'-i > g+ k_n$ and $j'-j > g+ k_n$ (the case $i-i' > g+ k_n$ and $j-j' > g+ k_n$ can be treated identically), then we have
\begin{eqnarray*}
\E\left(\mathbbm{1}_{A_{i,j}}\mathbbm{1}_{A_{i',j'}}\right)&=&\sum_{\omega,\omega'\in\mathcal{B}^{k_n}}\P\left(f(\sigma^iX)_{1}^{k_n}=f(\sigma^jY)_{1}^{k_n}=\omega, f(\sigma^{i'}X)_{1}^{k_n}=f(\sigma^{j'}Y)_{1}^{k_n}=\omega'\right)\\
&=&\sum_{\omega,\omega'\in\mathcal{B}^{k_n}}\mu\left(f(\sigma^iX)_{1}^{k_n}=\omega, f(\sigma^{i'}X)_{1}^{k_n}=\omega'\right)\mu\left(f(\sigma^jY)_{1}^{k_n}=\omega, f(\sigma^{j'}Y)_{1}^{k_n}=\omega'\right)\\
&=&\sum_{\omega,\omega'\in\mathcal{B}^{k_n}}\mu\left(f(X)_{1}^{k_n}=\omega, f(\sigma^{i'-i}X)_{1}^{k_n}=\omega'\right)\mu\left(f(Y)_{1}^{k_n}=\omega, f(\sigma^{j'-j}Y)_{1}^{k_n}=\omega'\right)
\end{eqnarray*}
by stationarity of $\mu$.

To use the mixing property, we need to work with cylinders whose preimage under $f$ does not have a length too large so that the gap is preserved. Thus, we define the set
\[\mathcal{Z}_n=\left\{\omega\in \mathcal{B}^{k_n}: |f^{-1}\omega|\leq k_n^3\right\}\]
and, using the $\alpha$-mixing, we obtain

\begin{eqnarray}
& & \sum_{\substack{\omega\in\mathcal{Z}_n \\ \omega'\in \mathcal{B}^{k_n} }}\mu\left(f(X)_{1}^{k_n}=\omega, f(\sigma^{i'-i}X)_{1}^{k_n}=\omega'\right)\mu\left(f(Y)_{1}^{k_n}=\omega, f(\sigma^{j'-j}Y)_{1}^{k_n}=\omega'\right)\nonumber\\
&\leq& \sum_{\substack{\omega\in\mathcal{Z}_n \\ \omega'\in \mathcal{B}^{k_n} }}\left[\mu\left(f(X)_{1}^{k_n}=\omega\right)\mu\left(f(X)_{1}^{k_n}=\omega'\right)+\alpha(g+k_n-k_n^3)\right]\nonumber\\
& &\times \mu\left(f(Y)_{1}^{k_n}=\omega, f(\sigma^{j'-j}Y)_{1}^{k_n}=\omega'\right)\nonumber\\
&\leq& \sum_{\substack{\omega\in\mathcal{Z}_n \\ \omega'\in \mathcal{B}^{k_n} }}\mu\left(f(X)_{1}^{k_n}=\omega\right)\mu\left(f(X)_{1}^{k_n}=\omega'\right)\left[\mu\left(f(Y)_{1}^{k_n}=\omega\right)\mu\left(f(Y)_{1}^{k_n}=\omega'\right)+\alpha(g+k_n-k_n^3)\right]\nonumber\\
& &+\sum_{\substack{\omega'\in \mathcal{B}^{k_n} }}\alpha(g+k_n-k_n^3)\mu\left(f(\sigma^{j'-j}Y)_{1}^{k_n}=\omega'\right)\nonumber\\
&\leq&2\alpha(g+k_n-k_n^3)+ \left[\sum_{\omega\in\mathcal{B}^{k_n}}f_*\mu\left(\omega\right)^2\right]^2. \label{ineqijzn}
\end{eqnarray}
When the length of the preimage cylinders is too large (i.e. $\omega\notin \mathcal{Z}_n$), we cannot use the mixing property, however, we observe that
\begin{eqnarray*}
& & \sum_{\substack{\omega\in\mathcal{Z}_n^C \\ \omega'\in \mathcal{B}^{k_n} }}\mu\left(f(X)_{1}^{k_n}=\omega, f(\sigma^{i'-i}X)_{1}^{k_n}=\omega'\right)\mu\left(f(Y)_{1}^{k_n}=\omega, f(\sigma^{j'-j}Y)_{1}^{k_n}=\omega'\right)\nonumber\\
&\leq& \sum_{\omega\in\mathcal{Z}_n^C}\mu\left(f(X)_{1}^{k_n}=\omega\right)\sum_{ \omega'\in \mathcal{B}^{k_n} }\mu\left(f(Y)_{1}^{k_n}=\omega, f(\sigma^{j'-j}Y)_{1}^{k_n}=\omega'\right)\nonumber\\
&\leq& \sum_{\omega\in\mathcal{Z}_n^C}\mu\left(f(X)_{1}^{k_n}=\omega\right)\mu\left(f(Y)_{1}^{k_n}=\omega\right)\nonumber\\
&=& \sum_{\omega\in\mathcal{Z}_n^C}\mu\left(f^{-1}\omega\right)^2.\nonumber\\
\end{eqnarray*}

Using the same argument as the one leading to \eqref{ineqvnca}, we observe that if $\omega\in \mathcal{Z}_n^C$ then $|\omega|=k_n$ and $|f^{-1}\omega|> k_n^3$, thus there exist $a\in \mathcal{A}$ and $\kappa\in\N$ such that $f^{-1}\omega \subset\sigma^{-\kappa}a\dots a$ where $|a\dots a|\geq\frac{k_n^3}{k_n}=k_n^2$. Thus, one can use assumption (A) to observe that $\mu(f^{-1}\omega)\leq\mu(\sigma^{-\kappa}a\dots a)=\mu(a\dots a)\leq ce^{-hk_n^2}$. Since this inequality is valid for any $\omega \in Z_n^C$, we obtain
\begin{eqnarray}
& & \sum_{\substack{\omega\in\mathcal{Z}_n^C \\ \omega'\in \mathcal{B}^{k_n} }}\mu\left(f(X)_{1}^{k_n}=\omega, f(\sigma^{i'-i}X)_{1}^{k_n}=\omega'\right)\mu\left(f(Y)_{1}^{k_n}=\omega, f(\sigma^{j'-j}Y)_{1}^{k_n}=\omega'\right)\nonumber\\
&\leq& \sum_{\omega\in\mathcal{Z}_n^C}\mu\left(f^{-1}\omega\right)^2\nonumber\\
&\leq&ce^{-hk_n^2}\sum_{\omega\in\mathcal{Z}_n^C}\mu\left(f^{-1}\omega\right)\nonumber\\
&\leq&ce^{-hk_n^2}\label{ineqnotinz}.
\end{eqnarray}

Thus, \eqref{ineqijzn} together with \eqref{ineqnotinz} gives us that when $i'-i > g+ k_n$ and $j'-j > g+ k_n$
\begin{equation*}\label{grandgrand1}
\E\left(\mathbbm{1}_{A_{i,j}}\mathbbm{1}_{A_{i',j'}}\right)\leq 2\alpha(g+k_n-k_n^3)+ \left[\sum_{\omega\in\mathcal{B}^{k_n}}f_*\mu\left(\omega\right)^2\right]^2+ce^{-hk_n^2}.
\end{equation*}
We observe that when $i'-i > g+ k_n$ and $j-j' > g+ k_n$ (the case $i-i' > g+ k_n$ and $j'-j > g+ k_n$ can be treated identically) then we can obtain \eqref{ineqijzn} only if we restrict our sum to $\omega'\in\mathcal{Z}_n$, thus the estimate for $\E\left(\mathbbm{1}_{A_{i,j}}\mathbbm{1}_{A_{i',j'}}\right)$ will be slightly different and we will have
\begin{equation*}\label{grandgrand2}\E\left(\mathbbm{1}_{A_{i,j}}\mathbbm{1}_{A_{i',j'}}\right)\leq 2\alpha(g+k_n-k_n^3)+ \left[\sum_{\omega\in\mathcal{B}^{k_n}}f_*\mu\left(\omega\right)^2\right]^2+2ce^{-hk_n^2}.
\end{equation*}
Thus, since $\card\{0\leq i,j,i',j'\leq n-1 \textrm{ s.t. $|i'-i| > g+ k_n$ and $|j'-j| > g+ k_n$}\}\leq n^4$, we have
\begin{eqnarray}\label{grandgrand2-b}
\sum_{\substack{|i'-i| > g+ k_n \\ |j'-j| > g+ k_n}}\E\left(\mathbbm{1}_{A_{i,j}}\mathbbm{1}_{A_{i',j'}}\right)
&\leq& n^4\left(2\alpha(g+k_n-k_n^3)+ \left[\sum_{\omega\in\mathcal{B}^{k_n}}f_*\mu\left(\omega\right)^2\right]^2+2ce^{-hk_n^2}\right)\nonumber\\
&=&n^4\left(2\alpha(g+k_n-k_n^3)+2ce^{-hk_n^2}\right)+ \E(S_n)^2 .\label{grandgrand2-b}
\end{eqnarray}
Now, we assume that $i'-i > g+ k_n$ and $0\leq j'-j \leq g+ k_n$ (the other cases such that $\lt|i'-i\rt| > g+ k_n$ and $\lt|j'-j\rt| \leq g+ k_n$  or such that $\lt|i'-i\rt| \leq g+ k_n$ and $\lt|j'-j\rt| > g+ k_n$ can be treated identically), using the mixing property as in \eqref{ineqijzn} and then, using H\"older's inequality, we have
\begin{eqnarray}
& & \sum_{\substack{\omega\in\mathcal{Z}_n \\ \omega'\in \mathcal{B}^{k_n} }}\mu\left(f(X)_{1}^{k_n}=\omega, f(\sigma^{i'-i}X)_{1}^{k_n}=\omega'\right)\mu\left(f(Y)_{1}^{k_n}=\omega, f(\sigma^{j'-j}Y)_{1}^{k_n}=\omega'\right)\nonumber\\
&\leq& \alpha(g+k_n-k_n^3)+\sum_{\substack{\omega\in\mathcal{Z}_n \\ \omega'\in \mathcal{B}^{k_n} }}\mu\left(f(X)_{1}^{k_n}=\omega\right)\mu\left(f(X)_{1}^{k_n}=\omega'\right) \mu\left(f(Y)_{1}^{k_n}=\omega, f(\sigma^{j'-j}Y)_{1}^{k_n}=\omega'\right)\nonumber\\
&\leq&\alpha(g+k_n-k_n^3)+\int_{\mathcal{A}^\N}\mu\left(f(X)_{1}^{k_n}=f(y)_{1}^{k_n}\right)\mu\left(f(\sigma^{j'-j}X)_{1}^{k_n}=f(\sigma^{j'-j}y)_{1}^{k_n}\right)d\mu(y)\nonumber\\
&\leq&\alpha(g+k_n-k_n^3)\nonumber\\
& &+\left[\int_{\mathcal{A}^\N}\mu\left(f(X)_{1}^{k_n}=f(y)_{1}^{k_n}\right)^2d\mu(y)\right]^{1/2}\left[\int_{\mathcal{A}^\N}\mu\left(f(\sigma^{j'-j}X)_{1}^{k_n}=f(\sigma^{j'-j}y)_{1}^{k_n}\right)^2d\mu(y)\right]^{1/2}\nonumber\\
&=&\alpha(g+k_n-k_n^3)+\sum_{\omega\in\mathcal{B}^{k_n}}f_*\mu\left(\omega\right)^3\nonumber\\
&\leq&\alpha(g+k_n-k_n^3)+\left[\sum_{\omega\in\mathcal{B}^{k_n}}f_*\mu\left(\omega\right)^2\right]^{3/2} \nonumber
\end{eqnarray}
where the last inequality comes from the subaditivity of the map $x\mapsto x^{3/2}$.

For the terms with $\omega\notin \mathcal{Z}_n$ we will use the estimate \eqref{ineqnotinz}. Thus, for $\lt|i'-i\rt| > g+ k_n$ and $\lt|j'-j\rt| \leq g+ k_n$ we have
\begin{equation*}\label{grandpetit}
\E\left(\mathbbm{1}_{A_{i,j}}\mathbbm{1}_{A_{i',j'}}\right)\leq \alpha(g+k_n-k_n^3)+\left[\sum_{\omega\in\mathcal{B}^{k_n}}f_*\mu\left(\omega\right)^2\right]^{3/2}+ce^{-hk_n^2}.
\end{equation*}
Moreover, since $\card\{0\leq i,j,i',j'\leq n-1 \textrm{ s.t. $|i'-i| > g+ k_n$ and $|j'-j| \leq g+ k_n$}\}\leq 2n^3(g+k_n)$, we have
\begin{eqnarray}
& &\sum_{\substack{|i'-i| > g+ k_n \\ |j'-j| \leq g+ k_n}}\E\left(\mathbbm{1}_{A_{i,j}}\mathbbm{1}_{A_{i',j'}}\right)+\sum_{\substack{|i'-i| \leq g+ k_n \\ |j'-j| > g+ k_n}}\E\left(\mathbbm{1}_{A_{i,j}}\mathbbm{1}_{A_{i',j'}}\right)\nonumber\\
&\leq& 4n^3(g+k_n)\left(\alpha(g+k_n-k_n^3)+\left[\sum_{\omega\in\mathcal{B}^{k_n}}f_*\mu\left(\omega\right)^2\right]^{3/2}+ce^{-hk_n^2}\right)\nonumber\\
&=&4n^3(g+k_n)\left(\alpha(g+k_n-k_n^3)+ce^{-hk_n^2}\right)+4(g+k_n)\E(S_n)^{3/2}.\label{grandpetit2}\end{eqnarray}
Finally, when $\lt|i'-i\rt| \leq g+ k_n$ and $\lt|j'-j\rt| \leq g+ k_n$, we just observe that
\begin{eqnarray}
\E\left(\mathbbm{1}_{A_{i,j}}\mathbbm{1}_{A_{i',j'}}\right)&\leq&\E\left(\mathbbm{1}_{A_{i,j}}\right)\nonumber\\
&=&\sum_{\omega\in\mathcal{B}^{k_n}}f_*\mu\left(\omega\right)^2\nonumber
\end{eqnarray}
and since $\card\{0\leq i,j,i',j'\leq n-1 \textrm{ s.t. $|i'-i| \leq g+ k_n$ and $|j'-j| \leq g+ k_n$}\}\leq 4n^2(g+k_n)^2$, we have
\begin{equation}\label{petitpetit}\sum_{\substack{|i'-i| \leq g+ k_n \\ |j'-j| \leq g+ k_n}}\E\left(\mathbbm{1}_{A_{i,j}}\mathbbm{1}_{A_{i',j'}}\right)\leq 4n^2(g+k_n)^2\sum_{\omega\in\mathcal{B}^{k_n}}f_*\mu\left(\omega\right)^2=4(g+k_n)^2\E(S_n).\end{equation}
Combining together the estimates \eqref{vardis}, \eqref{grandgrand2-b}, \eqref{grandpetit2} and \eqref{petitpetit}, we obtain
\begin{eqnarray*}
\var(S_n)&\leq&
4(g+k_n)\E(S_n)^{3/2}+4(g+k_n)^2\E(S_n)+2n^4\alpha(g+k_n-k_n^3)\\
& &+4n^3(g+k_n)\alpha(g+k_n-k_n^3)
+\left(2n^4+4n^3(g+k_n)\right)ce^{-hk_n^2}
\end{eqnarray*}
and the lemma is proved.
\end{proof}



\subsection*{Acknowledgements}The author would like to thank Rodrigo Lambert for useful discussions and the referee for useful suggestions and corrections to improve the paper.


\end{document}